\documentclass[a4paper,oneside]{amsart}
\usepackage{geometry}

\usepackage{mathtext}
\usepackage[T1]{fontenc}
\usepackage{color}
\usepackage{amsmath}
\usepackage{amsfonts,amssymb,amsthm}
\usepackage[pdftex]{graphicx}
\usepackage{comment}
\usepackage{float}
\usepackage{todonotes}
\usepackage{microtype}

\usepackage[colorlinks]{hyperref}
\hypersetup{
bookmarksnumbered,
pdfstartview={FitH},
breaklinks=true,
linkcolor=blue,
urlcolor=blue,
citecolor=blue,
bookmarksdepth=2
}

\usepackage{tikz}
\usetikzlibrary{positioning, arrows.meta}
\usetikzlibrary{decorations.markings}
\tikzset{middlearrow/.style={
        decoration={markings,
            mark= at position 0.5 with {\arrow{#1}} ,
        },
        postaction={decorate}
    }
}
\tikzset{firstthirdarrow/.style={
        decoration={markings,
            mark= at position 0.33 with {\arrow{#1}} ,
        },
        postaction={decorate}
    }
}
\tikzset{secondthirdarrow/.style={
        decoration={markings,
            mark= at position 0.66 with {\arrow{#1}} ,
        },
        postaction={decorate}
    }
}
\usetikzlibrary{cd}

\usepackage{enumerate}

\usepackage{cite}

\usepackage{color}
\definecolor{NoteColor}{rgb}{1,0,0}

\DeclareMathOperator{\Aut}{Aut}

\DeclareMathOperator{\Hom}{Hom}

\DeclareMathOperator{\Id}{Id}

\DeclareMathOperator{\Span}{Span}

\DeclareMathOperator{\Mat}{Mat}

\DeclareMathOperator{\Sp}{Sp}
\DeclareMathOperator{\GL}{GL}

\DeclareMathOperator{\OO}{O}

\DeclareMathOperator{\oo}{\mathfrak{o}}

\DeclareMathOperator{\spp}{\mathfrak{sp}}

\DeclareMathOperator{\Fix}{Fix}

\DeclareMathOperator{\Is}{Is}

\newcommand{\R}{\mathbb R}

\newcommand{\Z}{\mathbb Z}
\newcommand{\ZZ}{\mathbb Z}
\newcommand{\Q}{\mathbb Q}

\newcommand{\ab}{\mathrm{ab}}
\newcommand{\conj}{\mathrm{conj}}
\newcommand{\B}{\mathcal{B}}
\newcommand{\X}{\mathcal{X}}

\newcommand{\dbr}[1]{\{\!\!\{ #1 \}\!\!\}}
\newcommand{\Rep}{\mathcal{R}}

\theoremstyle{plain}
\newtheorem{teo}{Theorem}[section]

\newtheorem{cor}[teo]{Corollary}

\newtheorem{prop}[teo]{Proposition}

\theoremstyle{definition}
\newtheorem{df}[teo]{Definition}

\theoremstyle{remark}
\newtheorem{rem}[teo]{Remark}

\newcommand{\bs}{\setminus}
\newcommand{\defin}{\emph}



\relpenalty=10000 

\usetikzlibrary{math}

\begin{document}
\title{On double brackets for marked surfaces}

\author[M. Gekhtman]{Michael Gekhtman} 
	
\author[E. Rogozinnikov]{Eugen Rogozinnikov}

\address{Michael Gekhtman \\ Depatrmant of Mathematics, University of Notre Dame, Notre Dame, USA}
\email{mgekhtma@nd.edu}

\address{Eugen Rogozinnikov \\ Max Planck Institute for Mathematics in the Sciences, Leipzig, Germany}
\email{erogozinnikov@gmail.com}



\begin{abstract}
    We propose a construction of a double quasi-Poisson bracket on the group algebra associated to the twisted fundamental group of a marked oriented surface $(S,P)$ with boundary, where $P$ is a finite set of marked points on the boundary of the surface $S$ such that on every boundary component there is at least one point of $P$. We show that this double bracket is a noncommutative generalization of the well-known Goldman bracket, defined on the space of free homotopy classes of loops on $S$. For an algebra $A$ without polynomial identities, we construct a double bracket on the space of decorated twisted $\GL_n(A)$-, symplectic and indefinite orthogonal local systems.
\end{abstract}

\maketitle
\tableofcontents
    \section{Introduction}

    The notion of a double Poisson algebra was introduced independently by M.~Van den Bergh~\cite{VdBergh04} and by W.~Crawley-Boevey, P.~Etingof, and V.~Ginzburg~\cite{CEG07}. A double Poisson bracket on a noncommutative algebra generalizes the classical Poisson bracket for the algebra of smooth functions on a manifold.

    Poisson structures on spaces of surface group representations into Lie groups were introduced by W.~Goldman in 1986 \cite{G3} and have since been studied by many researchers. In particular, they played a role in the development of cluster algebra theory, for which triangulations of surfaces and character varieties provide a rich source of important examples. For example, in~\cite{GSV05} it is shown how the notion of a Poisson bracket/2-form compatible with the cluster structure can be used in the case of Penner coordinates on decorated Teichm\"uller spaces to recover the celebrated Weyl--Petersson form. In much greater generality, V.~Fock and A.~Goncharov described in~\cite{FG} a Poisson structure on spaces of surface group representations into split Lie groups using their coordinates on the corresponding $\mathcal X$- and $\mathcal A$-cluster varieties.

    In~\cite{MT14}, G.~Massuyeau and V.~Turaev introduce a double quasi-Poisson bracket on the group algebra of the fundamental group $\pi_1(S,*)$ of a surface $S$ with nontrivial boundary with a base point on the boundary. They also proved that this double bracket gives rise to a quasi-Poisson structure on the representation space $\Rep(\pi_1(S,*),\GL_N(\R))=\Hom(\pi_1(S,*),\GL_N(\R))/\GL_N(\R)$. In his thesis~\cite{Art18}, S.~Arthamonov developed a categorical approach to double brackets and combined it with methods of~\cite{MT14} to study a noncommutative generalization of Goncharov--Kenyon integrable systems~\cite{GK13}.  In~\cite{AOS24}, this construction was utilized in the case of networks on surfaces and used to investigate integrability of noncommutative systems arising from weighted directed networks on a cylinder with noncommutative weights.
    

    In this article, we introduce a double quasi-Poisson bracket on the group algebra associated to the twisted fundamental group of a marked oriented surface $(S,P)$ with boundary, where $P$ is a finite set of marked points on the boundary of the surface $S$ such that on every boundary component there is at least one point of $P$. We show that this double bracket is a noncommutative generalization of the well-known Goldman bracket, defined on the space of free homotopy classes of loops on $S$. Moreover, after fixing an identification of the twisted fundamental group of $(S,P)$ with the product $\pi_1(S,P)\times\Z/2\Z$, this double bracket essentially agrees with the double bracket from~\cite{MT14} if $|P|=1$. Further, for an algebra $A$ without polynomial identities, we construct a double quasi-Poisson structure on the space of decorated twisted $A^\times$-local systems, which we think of as the space of equivalence classes of flat $A$-bundles over the unit tangent bundle $T'S$ of $S$ with the holonomy around fibers of $T'S\to S$ equal to $-1$, equipped additionally with parallel sections along paths surrounding every marked point of $P$ called (peripheral) decoration.
    
    Furthermore, we discuss the partial (non-)abelianization procedure, which was introduced and studied in~\cite{KR,K-Thesis,GK}. This procedure provides a map between the space of decorated twisted $\GL_n(A)$-local systems over $S$ and the space of decorated twisted $A^\times$-local systems over certain ramified $n:1$-covering $\Sigma$ of $S$. This construction is almost one-to-one, however it depends on certain choices. We show that the double bracket we introduced is essentially independent of these choices. This allows us to lift the double bracket from the space of decorated twisted $A^\times$-local systems over $\Sigma$ to the space of decorated twisted $\GL_n(A)$-local systems over $S$ which are transverse enough (for precise definition, we refer to Section~\ref{sec:abelianization}). Moreover, the compatibility with the partial (non-)abelianization allows us to equip the noncommutative cluster-like algebra $\mathcal A_n(S,P)$ introduced in~\cite{BR} in case $n=2$ and in~\cite{GK} in general case with a double quasi-Poisson structure.

    \medskip
    \noindent {\bf Structure of the paper:}
	In Section~\ref{sec:top_data}, we introduce the topological and combinatorial data needed for the definition of the double bracket, such as the twisted fundamental group of a punctured surface and a peripheral decoration. In Section~\ref{sec:double.bracket}, we revise the definition of the double (quasi-)Poisson bracket for associative algebras, construct several algebras associated to a decorated punctured surface and define a double quasi-Poisson bracket on them. In Section~\ref{sec:character_variety}, we introduce a space of decorated twisted $\GL_n(A)$-local systems over decorated punctured surfaces, where $A$ is an associative $\R$-algebra, and construct a double bracket on the space of decorated twisted $\GL_n(A)$-local systems. In Section~\ref{sec:symplectic}, we specify the double bracket introduced in Section~\ref{sec:double.bracket} for spaces of symplectic and indefinite orthogonal decorated twisted local systems over decorated punctured surfaces. 

    \medskip
    \noindent {\em Acknowledgements.}
     M.G. was partially supported by NSF grant DMS $\#$2100785. He is also grateful to the Max Planck Institute for Mathematics in the Sciences for its hospitality during the June and September 2024 research visits. E.R. was supported by a postdoc fellowship of the German Academic Exchange Service (DAAD) and funding from the European Research Council (ERC) under the European Union’s Horizon 2020 research and innovation programme (grant agreement No 101018839). The authors thank Arkady~Berenstein, Olivier~Guichard, Clarence~Kineider, Vladimir~Retakh, Michael~Shapiro, and Anna~Wienhard for helpful and interesting discussions about some aspects of this note.

\section{Topological and combinatorial data}\label{sec:top_data}
	
\subsection{Twisted fundamental group of a smooth surface}\label{sec:fund.group}

Let $S$ be an oriented smooth surface. Let $\pi\colon T'S\to S$ be the unit tangent bundle of $S$. Let $\tilde P\in T'S$ be a finite set such that for all $p,q\in\tilde P$, $\pi(p)\neq \pi(q)$ if $p\neq q$. Let $\Gamma(S,P)$ and $\Gamma(T'S,\tilde P)$ be the fundamental groupoids of $S$ relative to $P$, resp. of $T'S$ relative to $\tilde P$. For any $p\in P$, the fiber $\pi^{-1}(p)=T'_p S$ is diffeomorphic to a circle. Let $\delta_p$ be the generator of $\pi_1(T'_p S,\tilde p)$, which goes around $T'_p S$ once in the direction prescribed by the orientation of $S$. Let $\tilde e_p$ be the identity element of $\pi_1(T'_p S,\tilde p)$.
    
We define the group $\pi_1(T'S,\tilde P)$ as the group freely generated by $\Gamma(T'S,\tilde P)$ subject to the following additional relations: $\tilde e_{p}=1$ for all $p\in P$ and $\delta_{p_1}=\delta_{p_2}$ for all $p_1, p_2\in  P$, where $1$ is the identity element of the group freely generated by $\Gamma(T'S,\tilde P)$. We denote by $\pi_1(T'_\bullet S)$ the image of the group $\pi_1(T'_p S,\tilde p)$ inside $\pi_1(T'S,P)$ and by $\delta$ the image of the generator $\delta_p$. This image does not depend on the choice of $p\in P$. Moreover, $\pi_1(T_\bullet'S)$ is central in $\pi_1(T'S,P)$.

Similarly, we define the group $\pi_1(S,P)$ as the group freely generated by $\Gamma(S,P)$ subject to the following additional relations: $e_{p}=1$ for all $p\in P$, where $e_p$ is the identity element of the fundamental group $\pi_1(S,p)\subseteq\Gamma(S,P)$.

We now obtain the following central short exact sequence:
\begin{equation*}
    1\to \pi_1(T_{\bullet}'S) \to \pi_1(T'S,\tilde P) \to \pi_1(S,P) \to 1    
\end{equation*}
If $S$ is not closed, the group $\pi_1(S,P)$ is free, so the sequence above splits. The choice of a splitting corresponds to the choice of a non-vanishing vector field on $S$. Let $\pi^s_1(S,P)$ denote the quotient of $\pi_1(T'S,\tilde P)$ by the central subgroup generated by $\delta^2$:
$$\left< \delta^2 \right>\subset\left< \delta \right>= \pi_1(T_\bullet'S).$$
Thus, we obtain the short exact sequence:
\begin{equation}\label{eq:twisted.fund.grp}
	1\to \ZZ/2\ZZ\cong \left< \delta \right> / \left< \delta^2 \right> \to \pi^s_1(S,P) \to \pi_1(S,P) \to 1    
\end{equation}
that once again splits. Slightly abusing the notation, we denote the generator of the group $\left< \delta \right> / \left< \delta^2 \right>$ by $\delta$. Note that the sequence~\eqref{eq:twisted.fund.grp} also splits when $S$ is closed, since a closed surface of negative Euler characteristic always admits a vector field with zeroes of even indices only.

\subsection{Marked and punctured surfaces}
	
Let $\bar S$ be a compact orientable smooth surface of finite type with or without boundary. Let $P$ be a nonempty finite subset of $\bar S$ such that on every boundary component of $\bar S$ there is at least one element of $P$. Elements of $P$ are called \defin{marked points} of $\bar S$. A pair $(\bar S, P)$ is called a \defin{marked surface}. Sometimes, we will distinguish between elements of $P$ that lie in the interior of $\bar S$ -- \defin{internal marked points} and that lie on the boundary -- \defin{external marked points}. 

We also define a non-compact surface $S:=\bar S\bs P$. Non-compact surfaces that can be obtained in this way are called \defin{punctured surfaces}, with the exception of the (closed) disk with one or two punctures on the boundary and the sphere with one or two punctures. Elements of $P$ are called \defin{punctures} of $S$. Every punctured surface can be equipped with a complete hyperbolic structure of finite volume with totally geodesic boundary. For every such hyperbolic structure, all the internal punctures are cusps and all boundary curves are bi-infinite geodesics. 
	
An \defin{ideal polygon decomposition} of $S$ is a polygon decomposition with oriented edges of $\bar S$ whose set of vertices agrees with $P$, such that, if $\gamma$ is an edge of the polygon decomposition, then the opposite edge $\bar\gamma$ is also an edge of this polygon decomposition. All boundary curves are always edges of every polygon decomposition. We always consider edges of an ideal polygon decomposition as homotopy classes of oriented paths (relative to their endpoints) connecting points in $P$. Connected components of the compliment on $S$ to all edges of an ideal polygon decomposition $\mathcal T$ are called \defin{faces} or \defin{polygons} of $\mathcal T$. Every edge belongs to the boundary of one or two polygons. In the first case, an edge is called \defin{external}, in the second -- \defin{internal}. An ideal polygon decomposition is called an \defin{ideal triangulation} if all its faces are triangles. Any ideal polygon decomposition of $S$ can be represented by an ideal geodesic polygon decomposition as soon as a hyperbolic structure as above on $S$ is chosen.
	
\subsection{Peripheral decoration}
	
To consider framed and decorated twisted local systems, we will need the following additional data on the surface $S$:  for every internal puncture $p\in P$, we fix a neighborhood $S_p\subset S$ of $p\in P$ that is diffeomorphic to a punctured disk. For every external puncture $p\in P$, we choose a neighborhood $S_p\subset S$ of $p$ that is diffeomorphic to a punctured half-disk. We also assume that all $S_p$ are so small that they are pairwise disjoint.
	
Further, for every internal puncture $p\in P$ we fix a simple smooth loop $\beta_p\colon [0,1]\to S_p$ around $p$ such that $\dot\beta_p(0)=\dot\beta_p(1)$, oriented so that $p$ is on the left of $\beta_p$ according to the orientation of $S$. For every external puncture $p\in P$, we chose a simple smooth path $\beta_p\colon ([0,1],\{0,1\})\to (S_p, S_p\cap\partial S)$ connecting the boundary components separated by $p$ and tangent to them, once again with orientation given by the one on $S$.

In both cases, up to isotopy there is only one such $\beta_p$. Since all $\beta_p$ are smooth, we can lift them to the $T'S$ namely to the curve $[\beta_p(t),\dot\beta_p(t)]\in T'S$, $t\in[0,1]$. We denote this lift by $T'\beta_p\colon [0,1]\to T'S$. Notice that for every internal puncture $p$, the lift $T'\beta_p$ is always a loop.
	
If for every $p\in P$ a curve $\beta_p$ as above is chosen, then we say that the surface $S$ is \defin{decorated}, and the collection $\mathcal D=\{\beta_p\mid p\in P\}$ is called a \defin{decoration} of $S$. Elements of $\mathcal D$ are called \defin{decoration curves}. Further, for every $p\in P$, we choose $\tilde p'\in T'S$ which lies in the interior of the curve $T'\beta_p$ and denote by $\tilde P'\subset T'S$ the set of all $\tilde p'$ and by $P'$ the projection of $\tilde P'$ to $S$.

\begin{rem}
    If $S$ has no internal punctures and an ideal polygon decomposition of $S$ is chosen, then the set of its edges generates $\pi_1(T'S,\tilde P')$ in the following sense: every edge between $p$ and $q$ can be lifted to $T'S$ so that it intersects $T'\beta_p$ at $p'$ and $T'\beta_q$ at $q'$. Then the segment between $p'$ and $q'$ represents an element of $\pi_1(T'S,\tilde P')$. Notice that for two edges that meet at $p\in P$, such lifts may not only intersect at $p'$ but also in a small neighborhood of $p'$ (as on Figure~\ref{fig:triangle.lift}). The generating element $\delta$ of $\pi_1(T'_\bullet S)$ is represented by the concatenation of the lifts of two opposite edges. Moreover, every polygon decomposition provides a presentation of $\pi_1(T'S,\tilde P')$: the generating set of this presentation consists of all lifts of edges of the polygon decomposition, and the polygons provide the relations: for every polygon, the concatenation of boundary edges in $T'S$ of this polygon is equal to $\delta$ (as in Figure~\ref{fig:triangle.lift}).

    If $S$ has internal punctures, then in addition to edges of the chosen polygon decomposition, we need to add to the set of generators of $\pi_1(T'S,\tilde P')$ all the peripheral loops around internal punctures, which naturally represent elements of $\pi_1(T'S,\tilde P')$.
\end{rem}  

\begin{figure}[ht]
    \centering
    
    \begin{tikzpicture}
	\tikzmath{\i=0; \j=0; \p=\i*5; \q=\j*3;}
    \draw[bend right, -latex] (-2,0) to node [midway] (v1){} (2,-1);
	\draw[bend right, -latex] (-2,0) to node [midway,auto] {$\beta_p$} (2,-1);
	
	\node at (0.65,-2.55) {$\alpha_1$};
	\node at (-0.2,-2.5) {$\overline{\alpha}_1$};
	\node at (-2,-3.6) {$\alpha_2$};
	\node at (-1.7,-2.9) {$\overline\alpha_2$};
	\node at (-2,-1.4) {$\alpha_3$};
	\node at (-1.75,-2.1) {$\overline\alpha_3$};

    \draw[bend left, latex-] (-2,-5) to node [midway] (v2){} (2,-4);
	\draw[bend left,latex-] (-2,-5) to node [midway,below] {$\beta_q$} (2,-4);

    \draw[bend left, latex-] (-4,-0.5) to node [midway] (v3){} (-4,-4.5);
	\draw[bend left,latex-] (-4,-.5) to node [midway,left] {$\beta_r$} (-4,-4.5);

	\draw[latex-]  plot[smooth, tension=0.7] coordinates {(v2) (0.3,-3.3)  (0.3,-1.55) (v1)}; 
	\draw[]  plot[smooth, tension=0.7] coordinates {(v2) (-0.4,-3.5)  (-0.4,-1.55) (v1)}; 

	\draw[-latex]  plot[smooth, tension=0.7] coordinates {(v2) (-0.8,-3.8) (-2.9,-3) (v3)}; 
	\draw[]  plot[smooth, tension=0.7] coordinates {(v2) (-0.1,-3.5) (-2.7,-2.3) (v3)}; 

	\draw[latex-]  plot[smooth, tension=0.7] coordinates {(v1) (-0.8,-1.25) (-2.9,-2) (v3)}; 
	\draw[]  plot[smooth, tension=0.7] coordinates {(v1) (-0.1,-1.5) (-2.7,-2.7) (v3)};

    \end{tikzpicture}
    
    \caption{Lifts of edges of an ideal triangle. $\alpha_i\circ\overline\alpha_i=\delta$ for all $i\in\{1,2,3\}$, and $\alpha_3\alpha_2\alpha_1=\delta$.}
    \label{fig:triangle.lift}
\end{figure}
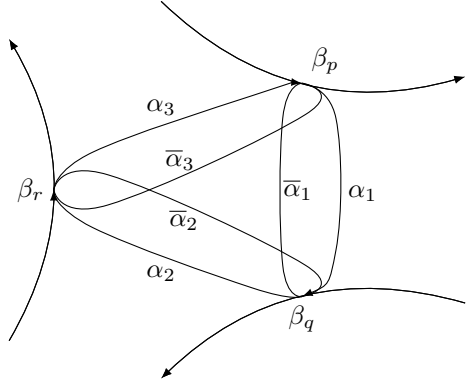

\section{Double bracket and surface algebras}\label{sec:double.bracket}

In this section, we recall the definition of a double (quasi-) Poisson bracket on an associative algebra, and define a double quasi-Poisson bracket on some algebra associated to a decorated punctured surface without internal punctures.

\subsection{Derivations and double brackets}


Let $A$ be a unital associative algebra over some field of characteristic zero. In what follows, if we write tensor powers of $A$, we always assume that the tensor product is taken over the center of $A$, and if we talk about linearity of maps between (tensor) powers of $A$, we always mean the linearity over the center of $A$.

Let $D\colon A\to A\otimes A$ be a linear map. $D$ is called a \defin{derivation for the outer bimodule structure on $A\otimes A$} if for all $a,b\in A$, $D(ab)=(a\otimes 1) D(b)+D(a)(1\otimes b)$. $D$ is called a \defin{derivation for the inner bimodule structure on $A\otimes A$} if for all $a,b\in A$, $D(ab)=(1\otimes a) D(b)+D(a)(b\otimes 1)$.

Let $n\in \mathbb N$ and let $\tau_n\colon A^{\otimes n}\to A^{\otimes n}$ be the cyclic permutation of factors, i.e. 
$$\tau_n(x_1\otimes\dots\otimes x_n)=x_2\otimes\dots\otimes x_n\otimes x_1.$$
A \defin{double bracket} on $A$ is a bilinear map $\dbr{\cdot,\cdot}\colon A\times A\to A\otimes A$ which is
\begin{itemize}
    \item a derivation in the second argument for the outer bimodule structure on $A\otimes A$, i.e. 
    $$\dbr{a,bc}=(b\otimes 1)\dbr{a,c}+\dbr{a,b}(1\otimes c);$$
    \item skew-symmetric, i.e. $\dbr{a,b}=-\tau_2(\dbr{b,a})$.
\end{itemize}
Since a double bracket is bilinear, it defines a linear map $\dbr{\cdot,\cdot}\colon A\otimes A\to A\otimes A$.
From the definition of the double bracket follows that it is a derivation in the first argument for the inner bimodule structure on $A\otimes A$, i.e. 
    $$\dbr{ab,c}=(1\otimes a)\dbr{b,c}+\dbr{a,c}(b\otimes 1)$$

\subsection{Triple brackets and double Poisson brackets} 

By~\cite[Proposition~2.3.1]{VdBergh04}, every double bracket defines a triple bracket, i.e. a map $\dbr{\cdot,\cdot,\cdot}\colon A^{\otimes 3}\to A^{\otimes 3}$ which satisfies the following two conditions: for all $a,b,c,d\in A$
$$\dbr{a,b,cd}=(c\otimes 1\otimes 1)\dbr{a,b,d}+\dbr{a,b,c} (1\otimes 1\otimes d)\;\text{ and }\;\dbr{c,a,b}=\tau_3\dbr{a,b,c}.$$

\begin{rem}
    To simplify the notation, we use Sweedler notations to omit the summation index in tensor products whenever it does not lead to any confusion, i.e. for $x=\sum_{i=1}^k x_i^{(1)}\otimes\dots\otimes x_i^{(k)}\in A^{\otimes n}$ we write $x=x^{(1)}\otimes\dots\otimes x^{(n)}$.
\end{rem}

For a derivation $D$, we can define a triple bracket associated to $D$ as follows: for $a,b,c\in A$
$$\dbr{a,b,c}_D:=\frac{1}{4}\left(D(a)^{(1)}D(b)^{(2)}\otimes D(b)^{(1)} D(c)^{(2)}\otimes D(c)^{(1)} D(a)^{(2)}\right).$$
For a double bracket $\dbr{\cdot,\cdot}$, the corresponding triple bracket is given by the formula:
$$\dbr{\cdot,\cdot,\cdot}:=\sum_{k=0}^{2} \tau_3^k\circ (\dbr{\cdot,\cdot}\otimes \Id)\circ(\Id\otimes\dbr{\cdot,\cdot})\circ\tau_3^{-k}.$$
A double bracket is \defin{Poisson} if the corresponding triple bracket vanishes, i.e. $\dbr{a,b,c}=0$ for all $a,b,c\in A$. 


\begin{rem}\label{rem:Lie bracket}
Every double Poisson bracket induces a Lie algebra structure on the \defin{cyclic space} associated to $A$ which is the abelianized space 
$$A^{\ab}:=A/[A,A] \text{ where } [A,A]=\Span_\Q\{ab-ba\mid a,b\in A\}.$$
Notice that since $[A,A]$ is just a subalgebra of $A$ and (in general) not an ideal, the cyclic space $A^{\ab}$ is a 
$\Q$-vector space without any natural $\Q$-algebra structure. The Lie algebra structure $\left<\cdot,\cdot\right>$ on $A^{\ab}$ induced by the double bracket $\dbr{\cdot,\cdot}$ is defined as follows:
$$\begin{array}{llcr}
\left<\cdot,\cdot\right>\colon & A^{\ab}\times A^{\ab} & \to & A^{\ab}\\
 & ([a],[b]) & \mapsto & [\mu(\dbr{a,b})]
\end{array}$$
where $a,b\in A$, $\mu\colon A\otimes A\to A$ is the multiplication map. This definition does not depend on representatives $a,b\in A$ of $[a],[b]\in A^{\ab}$ respectively. For more details, we refer to~\cite[Proposition~1.4]{VdBergh04} and~\cite[Section~4.5]{MT14}.    
\end{rem}

\subsection{Surface algebra and a double bracket on it}\label{sec:bracket}

Let now $(S,P)$ be a punctured surface without internal punctures with a decoration $\mathcal D=\{\beta_p\mid p\in P\}$. We consider the following quotient of the $\Q$-group algebra 
$$\mathcal A(S,P):=\Q[\pi_1^s(S,P')]/(\delta+1)=\Q[\pi_1(T'S,\tilde P')]/(\delta+1).$$
This algebra is unital. Moreover, since $\pi_1^s(S,P')$ is a central extension of a free group with $N\geq 2$ free generators by $\left< \delta\right>$, the center of $\mathcal A(S,P)$ is $\Q$. This algebra is of infinite dimension over $\Q$ if and only if $\pi_1^s(S,P')$ is infinite. We call $\mathcal A(S,P)$ the \defin{surface algebra}.

In this section, we define a double bracket $\dbr{\cdot,\cdot}$ on this algebra. First, we define a double bracket on the group algebra $\Q[\pi_1(T'S,\tilde P')]$. 
Let $\alpha_1$, $\alpha_2$ be two smooth simple curves that possibly intersect only along decoration curves of $S$. We decompose $\alpha_1$, $\alpha_2$ as products of curves that go from one  decoration curve to another one without intersecting any other decoration curve in between (such curves are, for example, edges of ideal polygon decompositions). We now assume that $\alpha_1$ and $\alpha_2$ are such curves. We define the double bracket for them and then extend the double bracket using the product rule. Let $\beta_p$ be a decoration curve where $\alpha_1$ and $\alpha_2$ meet. First we define the double bracket at $p$, we assume that $\alpha_1$ enters $\beta$ according to the orientation of $\beta_p$, $\alpha_2$ exits $\beta$ according to the orientation of $\beta_p$, and the first intersection point $\alpha_1\cap \beta_p$ is to the left of the first intersection point $\alpha_2\cap \beta_p$ (see Figure~\ref{fig:dbr_intersection}(a)). In this case, the concatenation of $\alpha_2\alpha_1$ along $\beta$ is well-defined, and we define: $\dbr{\alpha_1,\alpha_2}_p:=-\frac{1}{2}\alpha_2\alpha_1\otimes 1$. 
\begin{figure}[ht]
    \centering
\usetikzlibrary{math}

    \begin{tikzpicture}
    \foreach \i in {0,...,1}{
        \foreach \j in {0,...,1}{
            \tikzmath{\p=\i*5; \q=\j*3;}
            \draw[bend right, -latex] (-2+\p,0+\q) to node [midway] (v1){} (2+\p,0+\q);
            \draw[bend right, -latex] (-2+\p,0+\q) to node [midway,auto] {$\beta_p$} (2+\p,0+\q);
        
            \ifthenelse{\i = 0}
            {\draw[-latex]  plot[smooth, tension=0.7] coordinates {(-1.5+\p,-1.5+\q) (-1+\p,-0.8+\q) (v1)};}
            {\draw[latex-]  plot[smooth, tension=0.7] coordinates {(-1.5+\p,-1.5+\q) (-1+\p,-0.8+\q) (v1)};}
            \ifthenelse{\j = 1}
            {\draw[latex-]  plot[smooth, tension=0.7] coordinates {(1.5+\p,-1.5+\q) (1+\p,-0.8+\q) (v1)};}
		  {\draw[-latex]  plot[smooth, tension=0.7] coordinates     {(1.5+\p,-1.5+\q) (1+\p,-0.8+\q) (v1)};}

            \node at (-1.5+\p,-1+\q) {$\alpha_1$};
            \node at (1.5+\p,-1+\q) {$\alpha_2$};
        }}
        \node at (-2,3.5) {(a)};
        \node at (3,3.5) {(b)};
        \node at (-2,0.5) {(c)};
        \node at (3,0.5) {(d)};
    \end{tikzpicture}
    \caption{Intersection patterns}
    \label{fig:dbr_intersection}
\end{figure}
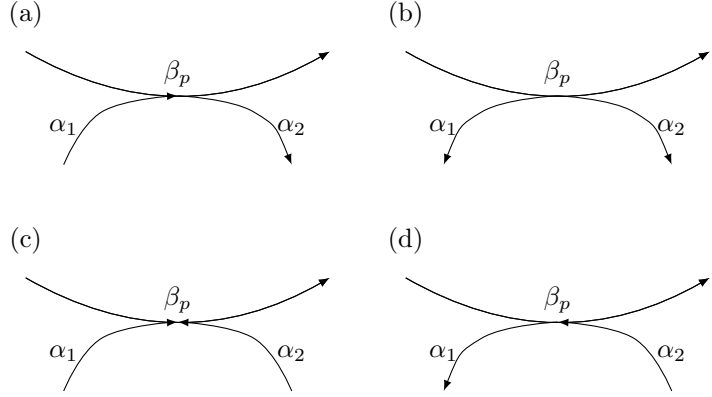

Using the rules:
$$\dbr{\alpha_1,\alpha_2^{-1}}_p=-(\alpha_2^{-1}\otimes 1)\dbr{\alpha_1,\alpha_2}_p(1\otimes\alpha_2^{-1}),$$
$$\dbr{\alpha_1^{-1},\alpha_2}_p=-(1\otimes\alpha_1^{-1})\dbr{\alpha_1,\alpha_2}_p(\alpha_1^{-1}\otimes 1),$$
$$\dbr{\alpha_2,\alpha_1}_p=-\tau_2(\dbr{\alpha_1,\alpha_2}_p),$$
we can extend the double bracket at $p$ to any kind of intersection of $\alpha_1$ and $\alpha_2$ along $\beta_p$ (see Figure~\ref{fig:dbr_intersection}):

(b) $\dbr{\alpha_1,\alpha_2}_p:=\frac{1}{2}\alpha_2\otimes \alpha_1$; (c) $\dbr{\alpha_1,\alpha_2}_p:=\frac{1}{2}\alpha_1\otimes \alpha_2$; (d) $\dbr{\alpha_1,\alpha_2}_p:=-\frac{1}{2}(1\otimes \alpha_1\alpha_2)$.

Finally, we define $\dbr{\alpha_1,\alpha_2}$ as the sum of all $\dbr{\alpha_1,\alpha_2}_p$ over all $p$ such that $\alpha_1$ and $\alpha_2$ meet along $\beta_p$. Using the product rule 
$$\dbr{\alpha_2\alpha_1,\alpha_3}=(1\otimes \alpha_2)\dbr{\alpha_1,\alpha_3}+\dbr{\alpha_2,\alpha_3}(\alpha_1\otimes 1),$$
the double bracket extends for all $\alpha_1$, $\alpha_2$ that are simple curves that possibly intersect only along decoration curves of $S$.

Let $\delta$ be the generator of $\pi_1(T_\bullet'S)$ from Section~\ref{sec:fund.group}, then $\dbr{\delta,\cdot}=0$ because $\delta$ is central in $\Q[\pi_1(T'S,\tilde P')]$. Smooth paths between points in $P'$ as above and $\delta$ generate $\pi_1(T'S,\tilde P')$, therefore, we obtain in this way a well-defined double bracket on $\Q[\pi_1(T'S,\tilde P')]$. Furthermore, since $\dbr{\delta+1,\cdot}=0$, this double bracket descends to the quotient algebra $\mathcal A(S,P)$. 



The bracket $\dbr{\cdot,\cdot}$ is in general not Poisson. However, it is so-called quasi-Poisson. To define this notion, we first introduce the \defin{uniderivation} $\partial_p\colon \mathcal A(S,P)\to\mathcal A(S,P)$ for every $p\in P$. For a~path $\alpha\colon [0,1]\to S$ which represents an element of $\pi_1^s(S,P')$, we define:
$$\partial_p(\alpha)=\begin{cases}
    \alpha\otimes 1- 1\otimes \alpha & \alpha(0),\alpha(1)\in\beta_p;\\
    \alpha\otimes 1 & \alpha(0)\in\beta_p,\;\alpha(1)\notin \beta_p;\\
    -1\otimes\alpha & \alpha(0)\notin \beta_p,\;\alpha(1)\in\beta_p;\\
    0  & \text{otherwise}.
\end{cases}$$
Let $\partial:=\sum_{p\in P}\partial_p$. A~double bracket is called \defin{quasi-Poisson} if it satisfies the following condition: $\dbr{\cdot,\cdot,\cdot}=\dbr{\cdot,\cdot,\cdot}_\partial$.



\begin{prop}
    The double bracket $\dbr{\cdot,\cdot}$ on $\mathcal A(S,P)$ defined in this section is quasi-Poisson. 
\end{prop}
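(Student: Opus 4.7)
The plan is to verify the identity $\dbr{a,b,c}=\dbr{a,b,c}_\partial$ by first reducing to a set of generators of $\mathcal{A}(S,P)$ and then to a local computation at one or two decoration curves.

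For the first reduction, both triple brackets are $\Q$-multilinear and satisfy Leibniz-type rules in each slot (the left-hand side by construction from a double bracket combined with the cyclic symmetry $\dbr{c,a,b}=\tau_3\dbr{a,b,c}$, the right-hand side because $\partial$ is built from uniderivations and $\dbr{\cdot,\cdot,\cdot}_D$ is a fixed trilinear template in $D$). Hence it suffices to check equality on generators, which we take to be $\delta$ together with smooth simple arcs $\alpha$ between two decoration curves whose interior meets no $\beta_p$. The element $\delta$ is central and satisfies $\delta=-1$ in $\mathcal{A}(S,P)$, so $\dbr{\delta,\cdot}=0$ and $\partial(\delta)=\delta\otimes 1-1\otimes\delta=0$; thus both triple brackets vanish whenever $\delta$ appears, and we are reduced to the case of three arcs $\alpha_1,\alpha_2,\alpha_3$.

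For the second reduction, $\dbr{\cdot,\cdot}=\sum_{p\in P}\dbr{\cdot,\cdot}_p$ and $\partial=\sum_{p\in P}\partial_p$ split as finite sums whose $p$th terms are supported at $\beta_p$. Substituting into
$$\dbr{a,b,c}=\sum_{k=0}^{2}\tau_3^k\circ(\dbr{\cdot,\cdot}\otimes\Id)\circ(\Id\otimes\dbr{\cdot,\cdot})\circ\tau_3^{-k}(a\otimes b\otimes c)$$
and into the trilinear template defining $\dbr{a,b,c}_\partial$, both sides become sums over triples $(p,q,r)\in P^3$ of local contributions. Terms involving a decoration curve $\beta_p$ that none of $\alpha_1,\alpha_2,\alpha_3$ meets are zero, so the verification reduces to the case where all three arcs meet at most two decoration curves, say $\beta_p$ and $\beta_q$. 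The verification then becomes a finite case check on the cyclic orders of the intersection points $\alpha_i\cap\beta_p$ (and $\alpha_i\cap\beta_q$) together with their in/out patterns, using the four templates (a)--(d) with coefficients $\pm\frac12$; in each configuration one expands both sides and matches the resulting tensors directly, noting that the product of two $\pm\frac12$ factors produces exactly the $\frac14$ normalization of $\dbr{\cdot,\cdot,\cdot}_\partial$.

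The main obstacle is the bookkeeping in the last step: the number of configurations of three oriented arcs at two decoration curves is substantial, and one must track outer/inner tensor placements carefully across the three cyclic permutations $\tau_3^k$. A natural shortcut is to identify $\mathcal{A}(S,P)$ with a successive fusion, in the sense of Van den Bergh, of local quasi-Poisson algebras attached to the decoration curves of an ideal polygon decomposition; since fusion preserves the quasi-Poisson property and each local building block is a free algebra with a standard double quasi-Poisson bracket, this yields the identity without direct enumeration. This is essentially the path followed by Massuyeau--Turaev in~\cite{MT14} for the case $|P|=1$, and I expect their argument to generalize once the fusion structure across several decoration curves is matched with the relations coming from an ideal polygon decomposition of $(S,P)$.
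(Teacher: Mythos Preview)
Your proposal is correct and follows essentially the same approach as the paper: reduce to generators (simple arcs between decoration curves together with $\delta$) and verify the identity $\dbr{\cdot,\cdot,\cdot}=\dbr{\cdot,\cdot,\cdot}_\partial$ by a finite local computation. The paper is in fact terser than you are: it asserts that it suffices to check on generators at each single point $p\in P$, carries out one configuration explicitly (three arcs meeting at one decoration curve in a specific in/out pattern), and declares the remaining configurations similar. Your more cautious discussion of possible cross-terms at two distinct decoration curves, and the Van~den~Bergh fusion shortcut you suggest, go beyond what the paper records; the paper uses neither and stays with the direct case-by-case check at a single $p$.
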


\begin{proof}
    It is enough to check the quasi-Poisson property on generators at every point $p\in P$. We check it for the configuration of three paths as on Figure~\ref{fig:3arrows}. Other configurations can be checked similarly.

    \begin{figure}[ht]
    \centering
    \usetikzlibrary{math}
    \begin{tikzpicture}
            \tikzmath{\i=0; \j=0; \p=\i*5; \q=\j*3;}
            \draw[bend right, -latex] (-2+\p,0+\q) to node [midway] (v1){} (2+\p,0+\q);
            \draw[bend right, -latex] (-2+\p,0+\q) to node [midway,auto] {$\beta_p$} (2+\p,0+\q);
        
            \ifthenelse{\i = 0}
            {\draw[-latex]  plot[smooth, tension=0.7] coordinates {(-1.5+\p,-1.5+\q) (-1+\p,-0.8+\q) (v1)};}
            {\draw[latex-]  plot[smooth, tension=0.7] coordinates {(-1.5+\p,-1.5+\q) (-1+\p,-0.8+\q) (v1)};}
            \ifthenelse{\j = 1}
            {\draw[latex-]  plot[smooth, tension=0.7] coordinates {(1.5+\p,-1.5+\q) (1+\p,-0.8+\q) (v1)};}
		  {\draw[-latex]  plot[smooth, tension=0.7] coordinates     {(1.5+\p,-1.5+\q) (1+\p,-0.8+\q) (v1)};}

			\draw[latex-]  plot[smooth, tension=0.7] coordinates {(0.7,-1.5) (0.4,-0.8) (v1)}; 

            \node at (-1.6,-1.2) {$\alpha_1$};
            \node at (0.3,-1.2) {$\alpha_2$};
			\node at (1.6,-1.2) {$\alpha_3$};
    \end{tikzpicture}
    \caption{A configuration of three generators}
    \label{fig:3arrows}
\end{figure}
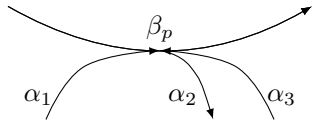
    $$\partial_p(\alpha_1)=-1\otimes\alpha_1,\;\partial_p(\alpha_2)=\alpha_2\otimes 1,\;\partial_p(\alpha_3)=-1\otimes\alpha_3.$$
    Therefore, $\dbr{\alpha_1\otimes\alpha_2\otimes\alpha_3}_\partial=\frac{1}{4}(1\otimes \alpha_2\alpha_3\otimes \alpha_1)$. Applying the triple bracket associated to $\dbr{\cdot,\cdot}$ at $p$, one can see that only one term is different from zero, namely the one corresponding to $k=2$. We obtain
    $$\dbr{\alpha_1\otimes\alpha_2\otimes\alpha_3}=\frac{1}{2}(\dbr{\cdot,\cdot}\otimes\Id)(\alpha_2\otimes\alpha_3\otimes\alpha_1)=\frac{1}{4}(1\otimes \alpha_2\alpha_3\otimes \alpha_1).$$
    Therefore, $\dbr{\cdot,\cdot}$ on $\mathcal A(S,P)$ is quasi-Poisson.
\end{proof}


\begin{rem}
    Similarly to Remark~\ref{rem:Lie bracket}, every double quasi-Poisson bracket on $\mathcal A(S,P)$ induces a Lie bracket on $\mathcal A(S,P)^{\ab}$.
\end{rem}

\begin{rem} 
    The double quasi-Poisson bracket as in this section can be also defined on the group algebra $\mathcal A_0(S,P)=\Q[\pi_1(S,P)]$ associated to the classical relative fundamental group without twisting. If, furthermore, $P=\{p\}$ is a one point set, this double bracket was defined and studied in~\cite{MT14}. In this case, the algebra $\mathcal A_0(S,p)$ is generated by all loops based at $p$. The abelianization $\mathcal A_0(S,p)^{\ab}:=\mathcal A_0(S,p)/[\mathcal A_0(S,p),\mathcal A_0(S,p)]$ is the $\Q$-vector space generated by free homotopy classes of loops on $S$ and, in fact, does not depend on $p$. Therefore, the induced bracket $\left< \cdot,\cdot \right>$ on $\mathcal A_0(S,p)^{\ab}$ is well-defined and agrees with the Goldman bracket ~\cite{G3}.
\end{rem}

\subsection{Double bracket and group action}

The double bracket defined in Section~\ref{sec:bracket} is naturally equivariant under diffeomorphisms of the surface $S$. More precisely, let $(S,P)$ be a punctured surface with a decoration $\mathcal D$. Let $f\colon S\to S$ be a diffeomorphism which preserves $P$ and $\mathcal D$, then $f$ extends to an automorphism $f_*$ of $\mathcal A(S,P)$ which maps $\alpha\in\pi_1^s(S,P')$ to $f\circ\alpha\in \pi_1^s(S,P')$. Further, we can extend $f_*$ to any $\mathcal A(S,P)^{\otimes n}$ componentwise.  Then for every two elements $\alpha_1,\alpha_2\in \pi_1^s(S,P')$, $\dbr{f\circ  \alpha_1,f\circ \alpha_2}=f_*\dbr{\alpha_1,\alpha_2}$. Notice that $f$ does not necessarily have to be orientation preserving. This implies the following corollary:

\begin{cor}
    Let $G$ be a group acting on a punctured surface $(S,P)$ by diffeomorphisms preserving the decoration $\mathcal D$. The double bracket $\dbr{\cdot, \cdot}$ is equivariant under this action. In particular, $\dbr{\cdot, \cdot}$ is equivariant under the action of the pure mapping class group of $S$.
\end{cor}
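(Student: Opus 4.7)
The plan is to reduce the corollary to the equivariance property for a single diffeomorphism asserted in the paragraph immediately preceding the statement, and then to argue that every element of the pure mapping class group admits a representative diffeomorphism that strictly (not merely up to isotopy) preserves the decoration $\mathcal D$.

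First, I would revisit carefully why any diffeomorphism $f\colon S\to S$ preserving $P$ and $\mathcal D$ (as a collection of \emph{oriented} smooth curves) induces an algebra automorphism $f_*$ of $\mathcal A(S,P)$ satisfying $\dbr{f_*\alpha_1,f_*\alpha_2}=(f_*\otimes f_*)\dbr{\alpha_1,\alpha_2}$. The double bracket is constructed locally as a sum of contributions $\dbr{\alpha_1,\alpha_2}_p$ attached to the points $p\in P$ where the two paths meet $\beta_p$, and each such contribution depends only on combinatorial data intrinsic to the picture at $\beta_p$: the orientations of the incoming and outgoing strands relative to the orientation of $\beta_p$, and their left--right order of contact along $\beta_p$ (the four cases (a)--(d) in Figure~\ref{fig:dbr_intersection}). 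Because $f$ sends $\beta_p$ to $\beta_{f(p)}$ respecting orientation, every such configuration at $p$ is carried bijectively to the analogous configuration at $f(p)$, so the defining formulas transform as required term by term. One then extends from simple curves to arbitrary elements of $\pi_1^s(S,P')$ by the product rule, using that $f_*$ is multiplicative.

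Second, a $G$-action on $(S,P)$ by diffeomorphisms preserving $\mathcal D$ is exactly a homomorphism from $G$ into the group of such diffeomorphisms, so applying the previous step to each $g\in G$ yields the desired equivariance of $\dbr{\cdot,\cdot}$ under $G$.

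Finally, for the statement about the pure mapping class group, the remaining task is to show that every pure mapping class has a representative that literally preserves $\mathcal D$, not merely up to isotopy. A pure mapping class fixes the marked points pointwise, and since each $\beta_p$ lies in a small neighborhood $S_p$ of its puncture and the $S_p$ are pairwise disjoint, a standard isotopy argument (using contractibility of the space of collar neighborhoods, respectively of parametrized embeddings of a disk with a prescribed boundary arc) lets one adjust any representative so that it becomes the identity on a neighborhood of each $\beta_p$, while remaining orientation-preserving and in the same pure mapping class. I expect this last step to be the only real technical point: care is needed to verify that the adjustment can be carried out simultaneously near all punctures without leaving the pure mapping class group. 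Once it is in place, the corollary follows immediately from the two preceding paragraphs.
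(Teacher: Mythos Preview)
Your proposal is correct and follows the same approach as the paper, which in fact offers no separate proof of the corollary at all: the paper simply states it as an immediate consequence of the equivariance property for a single decoration-preserving diffeomorphism established in the preceding paragraph. Your additional justification that each pure mapping class admits a representative literally preserving $\mathcal D$ is a reasonable elaboration of a point the paper leaves implicit.
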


\section{Double bracket and the character variety}\label{sec:character_variety}

In this section, we introduce the space twisted representation of the twisted fundamental group of a punctured surface into $\GL_n(A)$ for an associative algebra $A$. This is a slight generalization of the classical space of surface group representations into Lie groups. We define a~framing and a~decoration for twisted representations, and then introduce a~double quasi-Poisson structure on the space of decorated twisted representations. 

\subsection{Twisted character varieties}
A homomorphism $\rho\colon\pi_1^s(S,P')\to \GL_n(A)$ is called \defin{twisted} if $\rho(\delta)=-\Id$ where $\delta$ is a loop homotopic to a fiber of the unit tangent bundle $T'S\to S$. We denote the space of all twisted homomorphisms as above by $\Hom^{tw}(\pi_1^s(S,P'), \GL_n(A))$. The group $\GL_n(A)$ acts on $\Hom^{tw}(\pi_1^s(S,P'), \GL_n(A))$ by conjugation. The quotient space 
$$\Rep^{tw}((S,P),\GL_n(A)):=\Hom^{tw}(\pi_1^s(S,P'), \GL_n(A))/\GL_n(A)$$
is called \defin{twisted $\GL_n(A)$-character variety}. Elements of $\Rep^{tw}((S,P),\GL_n(A))$ are called twisted representation. Notice that since $\delta=-1$ in $\mathcal A(S,P)$, 
$$\Rep^{tw}((S,P),\GL_n(A))=\Hom(\mathcal A(S,P), \Mat_n(A))/\GL_n(A).$$

By the Riemann-Hilbert correspondence, twisted representations are in one-to-one correspondence with gauge equivalence classes of $\GL_n(A)$-local systems over $T'S$ with the property that the holonomy around a fiber of $T'S\to S$ is $-1$. We call such a local system \defin{twisted}. 

A twisted local system $\mathcal L\to T'S$ is called \defin{framed} if for every $p\in P$, in the neighborhood $S_p$ of $p\in S$ which is homeomorphic to a half-disk with a marked point $p$ on its boundary flat subbundles $F_p^i$, $i\in\{0,\dots n\}$ are chosen such that for all $z\in S_p$ and all $i\in\{0,\dots n-1\}$, $F_p^i|_z\subset F_p^{i+1}|_z$ and $F_p^i|_z$ is isomorphic to $A^i$ as a right $A$-module and for $\ell^{i+1}_p:=F_p^{i+1}/F_p^{i}$, $\ell_p^{i+1}|_z$ is isomorphic to $A$ as a right $A$-module.

A twisted local system $\mathcal L\to T'S$ is called \defin{decorated} if it is framed and for every $p\in P$, flat regular sections $s_p^i$ of $\ell^{i}_p$, $i\in\{1,\dots, n\}$  along $T'\beta_p$ are chosen. Since we are dealing with surfaces without internal marked points, such flat sections always exist. We denote by $\Rep^{tw,d}((S,P),\GL_n(A))$ the space of equivalence classes of twisted decorated $\GL_n(A)$-local system.

Let now $n=1$, then $GL_1(A)=A^\times$. If we fix a decorated twisted $A^\times$-local system, then for every element of $\gamma\in\pi_1^s(S,P')$ connecting $p'$ and $q'$ an element $a_\gamma\in A^\times$ is well-defined as follows: $\mathcal P_\gamma(s(p'))=s_q(q')a_\gamma$, where $\mathcal P_\gamma(s(p'))$ is the parallel transport along $\gamma$ from $p'$ to $q'$ of the element $s_p(p')$. In fact, $a_\gamma$ does not depend on the choice of $p'$ and $q'$ on $T'\beta_p$ resp. $T'\beta_q$. Moreover, $a_\gamma$ is invariant under the action of the gauge group on decorated twisted local systems, so in fact $a_\gamma$ is a function on $\Rep^{tw,d}((S,P),A^\times)$. Since the algebra $\mathcal A(S,P)$ is generated by elements of $\pi_1^s(S,P')$, every $f\in \mathcal A(S,P)$ can be seen as a function on $\Rep^{tw,d}((S,P),A^\times)$ with values in $A$. We denote by $\mathcal F_A(S,P)$ the algebra of all such functions. The natural map 
\begin{equation}\label{eq:paths_to_functions}
\mathcal A(S,P)\to \mathcal F_A(S,P)    
\end{equation}
is a surjective homomorphism of algebras. 

However, the push-forward of the double quasi-Poisson bracket defined in the previous section does not always define a double quasi-Poisson bracket on the algebra $\mathcal F_A(S,P)$ when the homomorphism $\mathcal A(S,P)\to \mathcal F_A(S,P)$ is not injective. For example, the algebra $A=\Mat_n(K)$ for any field $K$ is finite dimensional and  has polynomial identities over $\Q$. This implies that the algebra $\mathcal F_A(S,P)$ has polynomial identities over $\Q$. However, these identities are not respected by the double bracket we introduced. If the map $\mathcal A(S,P)\to \mathcal F_A(S,P)$ is injective, then the push forward under this map induces a double quasi-Poisson structure on $\Rep^{tw}((S,P),A^\times)$. We call an algebra $A$ with this property \defin{big}.

\begin{rem}
    Algebras of bounded linear operators on an infinite dimensional separable Hilbert space provide examples of big algebras.
\end{rem}

\subsection{Double bracket and partial (non-)abelianization}\label{sec:abelianization}

Although in general the double bracket cannot be pushed forward to the character variety for Lie groups over finite dimensional algebras, it behaves nicely with respect to the partial non-abelianization described in~\cite{KR}. 

Let $S$ be a surface as before. The tautological twisted $\mathcal A(S,P)$-local system is the $\mathcal A(S,P)$-local system such that the holonomy along $\gamma\in\pi_1^s(S,P')$ is equal to $\gamma\in \mathcal A(S,P)$. Every representation $\mathcal A(S,P)\to A$ gives rise to a twisted $A^\times$-local system, i.e. to an element of $\Rep^{tw}((S,P),A^\times)$.



Let $\mathcal T$ be an ideal triangulation of $(S,P)$ and $A$ be a unital associative  $\Q$-algebra. A framed twisted local system $\mathcal L\in\Rep^{tw,d}((S,P),\GL_n(A)$ is called $\mathcal T$-transverse if for every ideal triangle with ideal vertices $p,q,r\in P$, the subbundles $F_p^i$, $F_q^j$, and $F_r^k$ are in direct sum for all $i+j+k\leq n$ (when the corresponding subbundles are transported parallelly to some common internal point of the triangle). 

In~\cite{GK}, a procedure is described that identifies the moduli space of twisted $\mathcal T$-transverse framed (or decorated) $\GL_n(A)$-local systems over a surface $S$ and the moduli space of twisted (resp. decorated) $A^\times$-local systems over a certain $n:1$-ramified covering $\Sigma$ of $S$ with only simple ramification points. This construction is equivalent to the partial non-abelianization introduced in~\cite{KR} (for $n=2$) and in~\cite{K-Thesis} (for general $n$) using spectral networks. This identification is a homomorphism of moduli spaces.

We sketch the construction of the covering $\Sigma$, for more details we refer the reader to~\cite{KR,GK,gmn13,hn16,KKRTT}. The construction is local: in the interior an ideal triangle $T\subseteq S$, we chose $\frac{n(n+1)}{2}$ points that we will call ``white'', and $\frac{n(n-1)}{2}$ ``black'' points that will be the ramification points of the covering $\Sigma\to S$. Further, we chose $n$ ``black'' points on every edge, but they will be regular (see Figure~\ref{fig:covering}).

\begin{figure}[ht]
    \centering
\usetikzlibrary{math}

\usetikzlibrary{calc}
    \begin{tikzpicture}[scale=1.5]

\node[circle, inner sep = 1.5pt](v3) at (0,2) {};
\node[circle, inner sep = 1.5pt](v2) at (2.5,-2) {};
\node[circle, inner sep = 1.5pt](v1) at (-2.5,-2) {};

\draw [] (v1)-- (v3);
\draw [] (v1)-- (v2);
\draw [] (v2)-- (v3);

\foreach \t in {0,...,4}{
	\foreach \s in {0,...,\t}{
	\node[fill, circle, inner sep = 1.5pt](v\t\s) at ($(v1)+(\t*5/4-\s*5/8,\s)$) {};
	}}

\foreach \t in {1,...,3}{
	\foreach \s in {1,...,\t}{
	\node[circle, draw=black, inner sep=0pt, minimum size=5pt](w\t\s) at ($(v1)+(\t*5/4-\s*5/8+0.5*5/4,\s-0.4)$) {};
	}}

\foreach \t in {1,...,3}{
	\foreach \s in {1,...,\t}{	\pgfmathtruncatemacro{\sn}{\s-1}
	\draw [red, semithick, bend left=10] (v\t\s) to node [auto]{} (w\t\s);
	\draw [red, semithick, bend left=10,-latex] (w\t\s) to node [auto]{} (v\t\sn);
}}

\foreach \t in {1,...,3}{
	\foreach \s in {1,...,\t}{	\pgfmathtruncatemacro{\tn}{\t+1}
	\draw [blue, semithick, bend right=10,latex-] (v\t\s) to node [auto]{} (w\t\s);
	\draw [blue, semithick, bend right=10] (w\t\s) to node [auto]{} (v\tn\s);
}}

\foreach \t in {1,...,3}{
	\foreach \s in {1,...,\t}{	\pgfmathtruncatemacro{\tn}{\t+1}
	\pgfmathtruncatemacro{\sn}{\s-1}
	\draw [green, thick, bend right=10,latex-] (v\tn\s) to node [auto]{} (w\t\s);
	\draw [green, thick, bend right=10] (w\t\s) to node [auto]{} (v\t\sn);
}}

\end{tikzpicture}
    \caption{Construction of the covering $\Sigma$ for $n=3$}
    \label{fig:covering}
\end{figure}
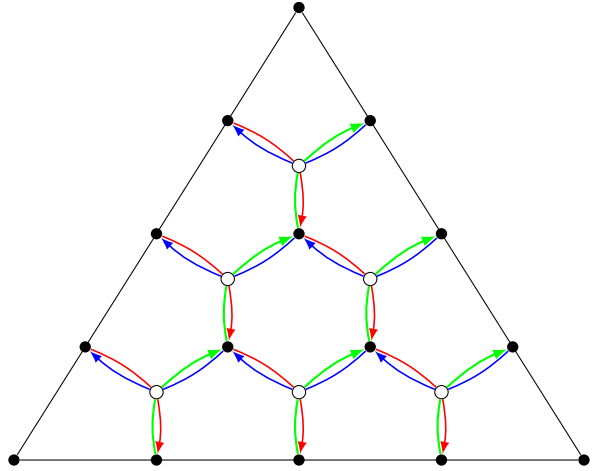

Further, we connect black and white points by $3n$ oriented \defin{zigzag paths} that start and end at black points on edges following the rule that the path turns right at white points and left at black points (see red, blue and green paths on Figure~\ref{fig:covering}). Every segment of a zigzag path connecting a white and a black point and not containing any black and white points in its interior is called a \defin{simple segment}. Every zigzag path defines a polygonal region that lies to the right of the path called a \defin{cell}. Every cell contains exactly one of three marked point of $T$.

For every simple segment, there exist exactly two cells sharing it. We glue the cells along these common simple segments and obtain the covering $\hat T$ of the initial triangle $T$. It is easy to see then that white points are regular because at white points three cells with angles $2\pi/3$ meet, but at the black points in the interior of the triangle three cells with angles $4\pi/3$ meet, i.e. these points are ramification points of index~$2$. 

If the triangle $T$ shares an edge with another triangle $T'$, then every zigzag path of $T$ meeting this edge continues to a zigzag path in $T'$ and vise versa. So for the cell of $T$ which corresponds to this zigzag path, there is a cell in $T'$ corresponding to its continuation. This gives a rule how to glue lifts $\hat T$ and $\hat T'$ together along the common boundary. Applying this construction consequently to all triangles, we obtain an $n:1$-ramified covering $\Sigma\to S$ with only simple ramifications. 

In the lift $\hat T\subseteq \Sigma$ of a triangle $T\subseteq S$, for every edge of a cell, there is another cell that is glued with this cell along this edge. So there is a unique path that goes from  the marked point in the first cell to the marked point in the second cell, intersecting this common edge they were glued along. The collection of all such paths form a generating set of $\pi_1(\Sigma,\hat P)$ where $\hat P$ is the full lift of $P$ to $\Sigma$. We also denote by $\hat P'\subset\Sigma$ the full lift of $P'\subset S$ to $\Sigma$.

If we do a flip in the triangulation $\mathcal T$, i.e. replace one diagonal by the other one in a quadrilateral, we can partially abelianize our $\GL_n(\mathcal A(\Sigma,\hat P))$-local system on $S$ and obtain a new twisted $\mathcal A(\Sigma,\hat P)$-local system on $\Sigma$. Since a changing of an ideal triangulation on $S$ induces not only a change of the ideal polygon decomposition on $\Sigma$ but also the local system changes, we need to prove that the double bracket is equivariant under this change. In other words, a flip $F$ induces the algebra homomorphism $F_*\colon \mathcal A(\Sigma,\hat P)\to \mathcal A(\Sigma,\hat P)$. We need to check that for every $x_1,x_2\in \mathcal A(\Sigma,\hat P)$, $F_*(\dbr{x_1,x_2})=\dbr{F_*(x_1),F_*(x_2)}$.  

A flip can be realized as a sequence of elementary transformation in the polygon decompositions of $S$ and $\Sigma$. Every elementary transformation happens in a quadrilateral (in $S$) with two ramification points connecting the same sheets (in $\Sigma$) as on Figures~\ref{fig:flipS} and~\ref{fig:flipSigma} (non-ramified sheets are not affected).


\begin{figure}[ht]
    \centering
    \begin{tikzpicture}

\draw [] (0,-2) node [label= below:$1$] {} --  (3,0) node[fill, circle, inner sep = 1.5pt,label=right:$2$] (v2) {};
\draw [] (v2) -- (0,2) node [fill, circle, inner sep = 1.5pt,label= above:$3$] (vsm) {};
\draw [] (0,2) -- (-3,0) node [fill, circle, inner sep = 1.5pt,label= left:$4$] (vsp){};
\draw [] (-3,0) --  (0,-2) node [fill, circle, inner sep = 1.5pt] {};

\node[fill, circle, inner sep = 1.5pt](vs) at (0,2) {};
\node[fill, circle, inner sep = 1.5pt](v1) at (0,-2) {};

\draw [] (v1)-- (vs);
\draw [red,densely dashed](v2)--(vsp);


\node at (0,-3){};

\end{tikzpicture}
    \caption{Flip in $Q\subset S$}
    \label{fig:flipS}
\end{figure}
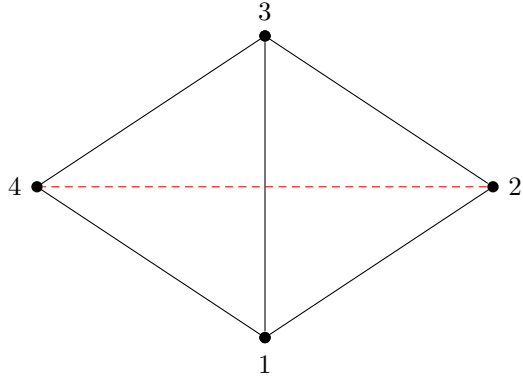

We do a flip in a quadrilateral $Q$ as shown on Figure~\ref{fig:flipS}. The lift of this quadrilateral is the cylinder $\hat Q$ (see Figure~\ref{fig:flipSigma}). We denote by $a_{ij}\in\pi_1^s(\Sigma,\hat P')$ an edge of the ideal hexagonal decomposition of $\Sigma$ going from the vertex $j'$ to the vertex $i''$. Then $F_*(a_{24})=a_{23} a^{-1}_{13} a_{14} +a_{21} a^{-1}_{31} a_{34}$. If $x$ is an edge of $\hat{\mathcal T}$ which is not changed by $F$, i.e. it does not intersect the cylinder $\hat Q$, then $F_*(x)=x$ and $\dbr{F_*(x),F_*(a_{24})}=\dbr{x,F_*(a_{24})}=F_*(\dbr{x,a_{24}})$. 

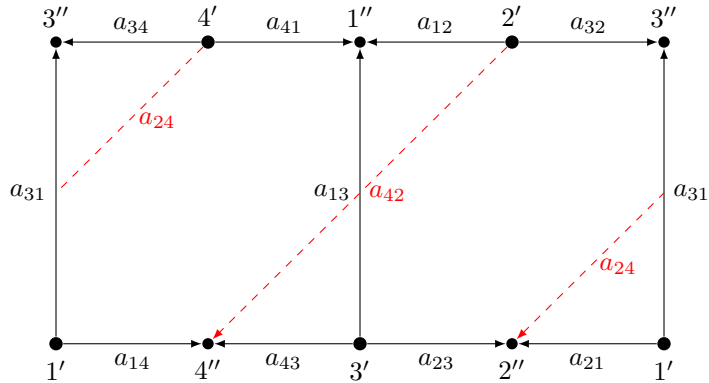
\begin{figure}[ht]
    \centering
    \begin{tikzpicture}

\node [circle, fill, inner sep=0pt, minimum size=5pt,label=below:$1'$] (vw1) at (5,-2){};
\node [fill, circle, inner sep = 1.5pt,label=above:$3''$] (vb3) at (5,2){};

\node [circle, fill, inner sep=0pt, minimum size=5pt,label=above:$4'$] (vw4) at (7,2){};
\node [fill, circle, inner sep = 1.5pt,label=below:$4''$] (vb4) at (7,-2){};

\node [circle, fill, inner sep=0pt, minimum size=5pt,label=below:$3'$] (vw3) at (9,-2){};
\node [fill, circle, inner sep = 1.5pt,label=above:$1''$] (vb1) at (9,2){};

\node [circle, fill, inner sep=0pt, minimum size=5pt,label=above:$2'$] (vw2) at (11,2){};
\node [fill, circle, inner sep = 1.5pt,label=below:$2''$] (vb2) at (11,-2){};

\node [circle, fill, inner sep=0pt, minimum size=5pt,label=below:$1'$] (vw11) at (13,-2){};
\node [fill, circle, inner sep = 1.5pt,label=above:$3''$] (vb33) at (13,2){};

\draw [latex-](vb1) -- node[left]{$a_{13}$} (vw3);
\draw [-latex](vw1) -- node[left]{$a_{31}$}  (vb3);
\draw [-latex](vw11) -- node[right]{$a_{31}$}  (vb33);

\draw [-latex](vw2) --  node[above]{$a_{32}$} (vb33);
\draw [-latex](vw2) --  node[above]{$a_{12}$} (vb1);

\draw [-latex](vw3) --  node[below]{$a_{23}$} (vb2);
\draw [-latex](vw11) --  node[below]{$a_{21}$} (vb2);

\draw [latex-](vb4) --  node[below]{$a_{14}$} (vw1);
\draw [-latex](vw4) --  node[above]{$a_{41}$} (vb1);

\draw [latex-](vb3) --  node[above]{$a_{34}$} (vw4);
\draw [latex-](vb4) --  node[below]{$a_{43}$} (vw3);

\draw [red,dashed,-latex](vw2) -- node[right]{$a_{42}$}(vb4);

\draw [red,dashed](vw4) -- node[right]{$a_{24}$}(5,0);
\draw [red,dashed,latex-](vb2) -- node[right]{$a_{24}$}(13,0);
\end{tikzpicture}
    \caption{Flip in $\hat Q\subset \Sigma$}
    \label{fig:flipSigma}
\end{figure}

Only two edges intersect $\hat Q$, namely $a_{13}$ and $a_{31}$. We check the equivariance for $a_{31}$. The calculation for $a_{13}$ is similar. $F_*(a_{31})=a_{34}F_*(a_{24}^{-1})a_{21}$. The direct computation of the double bracket shows the equivariance.

This means that the double bracket can be lifted using the non-abelianization map to the moduli space $\Rep^{tw,d}((S,P),\GL_n(A))$ of decorated twisted $\GL_n(A)$-local systems on $S$ that are transverse to at least one ideal triangulation of $S$ for a big algebra $A$.

\subsection{Relation to noncommutative cluster algebras}

The (non-)abelianization allows us to construct a $\mathbb Q$-algebra $\mathcal A_n(S,P)$ as follows: As we have seen, every ideal triangulation $\mathcal T$ of $S$ gives rise to a generating system of the fundamental group $\pi_1^s(\Sigma,\hat P')$, which we denote by $\Delta_\mathcal T$. Every flip relating two triangulations $\mathcal T$ and $\mathcal T'$ provides a change of a generating system from $\Delta_\mathcal T$ to $\Delta_{\mathcal{T}'}$ together with formulas relating generators of  $\Delta_\mathcal T$ and $\Delta_{\mathcal{T'}}$ which are noncommutative Laurent polynomials and are usually called \defin{mutation formulas}.

The generating set of the algebra $\mathcal A_n(S,P)$ is defined to be the union $\bigcup_\mathcal T \Delta_\mathcal T$. The relations in $\mathcal A_n(S,P)$ are prescribed by the following two rules:
\begin{itemize}
    \item For every $\mathcal T$, the set $\Delta_\mathcal T$ generates the group $\pi_1^s(\Sigma,\hat P')$. All the relations that the elements of $\Delta_\mathcal T$ satisfy as generators of $\pi_1^s(\Sigma,\hat P')$ are relations in $\mathcal A_n(S,P)$. 
    \item If $\mathcal T$ and $\mathcal T'$ are related by a flip, all the mutation formulas between $\Delta_\mathcal T$ and $\Delta_{\mathcal T'}$ are relations in $\mathcal A_n(S,P)$.
\end{itemize}

Following the approach introduced by Berenstein and Retakh (cf.~\cite{BR,BHR,BHRR}), we call the algebra  $\mathcal A_n(S,P)$ the \defin{noncommutative cluster-like algebra} associated to the marked surface $(S,P)$.

Since the double bracket on $\mathcal A(\Sigma,\hat P)$ is equivariant with respect to flips, the algebra $\mathcal A_n(S,P)$ can be equipped with a double bracket as follows: for every ideal triangulation $\mathcal T$, we define the double bracket on edges of $\mathcal T$ as it is done in $\mathcal A(\Sigma,\hat P)$. The equivariance by flips guarantees that this bracket extends to the entire $\mathcal A_n(S,P)$. Therefore, $\mathcal A_n(S,P)$ becomes naturally a double quasi-Poisson algebra.

\section{Symplectic and indefinite orthogonal local systems}\label{sec:symplectic}

	Involutive algebras form an important class of noncommutative algebras. Over involutive algebras, generalizations of many classical groups can be constructed (e.g. orthogonal groups, symplectic groups). In this chapter, we define algebras with anti-involutions and symplectic and indefinite orthogonal groups over such algebras that were introduced and studied in~\cite{ABRRW}. Further, following~\cite[Section~5.3]{KR}, we introduce decorated twisted symplectic and indefinite orthogonal local systems and provide a double quasi-Poisson bracket on the moduli spaces of decorated twisted symplectic and indefinite orthogonal local systems for big involutive algebras.
	
	\subsection{Symplectic and indefinite orthogonal groups over involutive algebras}
	
	Let $A$ be a unital associative, possibly noncommutative 
	$\Q$-algebra.
	
	\begin{df}\label{def:antiinv}
		An \defin{anti-involution} on $A$ is a $\Q$-linear map $\sigma\colon A\to A$ such that
		\begin{itemize}
			\item $\sigma(ab)=\sigma(b)\sigma(a)$;
			\item $\sigma^2=\Id$.
		\end{itemize}
		An \defin{involutive $\Q$-algebra} is a pair $(A,\sigma)$, where $A$ is a $\Q$-algebra and $\sigma$ is an anti-involution on $A$.
	\end{df}
	
	\begin{df}
		Two elements $a,a'\in A$ are called \defin{congruent}, if there exists $b\in A^\times$ such that $a'=\sigma(b)ab$.
	\end{df}
	
	\begin{df} An element $a\in A$ is called \defin{$\sigma$-symmetric} if $\sigma(a)=a$. An element $a\in A$ is called \defin{$\sigma$-anti-symmetric} if $\sigma(a)=-a$. We denote
		$$A^{\sigma}:=\Fix_A(\sigma)=\{a\in A\mid \sigma(a)=a\},$$
		$$A^{-\sigma}:=\Fix_A(-\sigma)=\{a\in A\mid \sigma(a)=-a\}.$$
	\end{df}
	
	\begin{df}
		The closed subgroup $$U_{(A,\sigma)}=\{a\in A^\times\mid \sigma(a)a=1\}$$ of $A^\times$ is called the \defin{unitary group} of $A$. The Lie algebra of $U_{(A,\sigma)}$ agrees with $A^{-\sigma}$.
	\end{df}

	\begin{df}
		The \defin{symplectic group} $\Sp_2$ over $(A,\sigma)$ is 
		$$\Sp_2(A,\sigma):=\{g\in \Mat_2(A)\mid \sigma(g)^t\Omega g=\Omega\}\text{ where } \Omega=\begin{pmatrix}
        0 & 1 \\
        -1 & 0
      \end{pmatrix}.$$
      The \defin{indefinite orthogonal group} $\OO_{(1,1)}$ over $(A,\sigma)$ is 
		$$\OO_{(1,1)}(A,\sigma):=\{g\in \Mat_2(A)\mid \sigma(g)^t\Omega g=\Omega\}\text{ where } \Omega=\begin{pmatrix}
        0 & 1 \\
        1 & 0
      \end{pmatrix}.$$
      
	\end{df}
	We can determine the Lie algebras $\spp_2(A,\sigma)$ of $\Sp_2(A,\sigma)$ and $\oo_{(1,1)}(A,\sigma)$ of $\OO_{(1,1)}(A,\sigma)$:
	\begin{align*}
	    \spp_2(A,\sigma)& =\left\{g\in \Mat_2(A)\mid \sigma(g)^t\begin{pmatrix}
        0 & 1 \\
        -1 & 0
      \end{pmatrix}+ \begin{pmatrix}
        0 & 1 \\
        -1 & 0
      \end{pmatrix} g=0\right\}\\
    & =\left\{\begin{pmatrix}
		x & z \\
		y & -\sigma(x)
	\end{pmatrix}\mid x\in A,\;y,z\in A^\sigma\right\},
	\end{align*}
    \begin{align*}
        \oo_{(1,1)}(A,\sigma)& =\left\{g\in \Mat_2(A)\mid \sigma(g)^t\begin{pmatrix}
        0 & 1 \\
        1 & 0
      \end{pmatrix}+ \begin{pmatrix}
        0 & 1 \\
        1 & 0
      \end{pmatrix} g=0\right\}\\
      & =\left\{\begin{pmatrix}
		x & z \\
		y & -\sigma(x)
	\end{pmatrix}\mid x\in A,\;y,z\in A^{-\sigma}\right\}.
    \end{align*}
    We now consider $A^2$ as a right $A$-module over $A$. Let $x,y\in A^2$, the $A$-valued form $$\omega(x,y):=\sigma(x)^t\Omega y$$
    is called  \defin{standard symplectic} if $\Omega=\begin{pmatrix}
        0 & 1 \\
        -1 & 0
      \end{pmatrix}$ and \defin{standard indefinite orthogonal} if $\Omega=\begin{pmatrix}
        0 & 1 \\
        1 & 0
      \end{pmatrix}$. 
    Both forms have many properties in common. Therefore, below we discuss both cases in parallel. When some property is specific only for one of these forms, we will emphasize this explicitly.
      
    Let $(x,y)$ be a basis of $A^2$, i.e. a pair of elements $x,y\in A^2$ such that the map $A^2\to A^2$, $(a,b)\mapsto xa+yb$ for $a,b\in A^2$ is an isomorphism of right $A$-modules. We say that $x\in A^2$ is \defin{regular} if there exist $y\in A^2$ such that $(x,y)$ is a basis. We say that a basis $(x,y)$ is \defin{isotropic} if $\omega(x,x)=\omega(y,y)=0$. We say that this basis is \defin{normalized} if, furthermore $\omega(x,y)=1$.
	
	Let $x\in A^2$ be a regular isotropic element, i.e. $\omega(x,x)=0$. We call the set $xA:=\{xa\mid a\in A\}$ an \defin{isotropic $A$-line spanned by $x$}. The space of all isotropic $A$-lines is denoted by $\Is(\omega)$.
	
	\subsection{Symplectic and indefinite orthogonal local systems}
	
	We denote by $G=\Aut(\omega)$ which is $\Sp_2(A,\sigma)$ if $\Omega$ is standard symplectic and $\OO_{(1,1)}(A,\sigma)$ if $\Omega$ is standard indefinite orthogonal.
 
    Let $\mathcal L\to T'S$ be a twisted $\GL_2(A)$-local system  over $S$. We say that $\mathcal L$ is a \defin{twisted $G$-local system} (or just \defin{twisted symplectic resp. orthogonal indefinite local system}) if there exists a parallel field of the standard symplectic, resp. the standard orthogonal indefinite 2-form $\omega\colon\mathcal L\times\mathcal L\to A$ on $T'S$. 
	
	A framing of a twisted $G$-local system is called \defin{isotropic} if for all $p\in P$ a parallel subbundle $F^1_p$ in the neighborhood $S_p$ defining the framing is isotropic with respect to the field of the form $\omega$. A decoration $(s_p^1,s_p^2)_{p\in P}$, $s^1_p\in F^1_p=\ell^1_p$, $s^2_p\in \mathcal L|_{S_p} / F^1_p=\ell^2_p$ of a twisted  $G$-local system is called \defin{normalized} if $\omega(s_p^1,s_p^1)=0$ and $\omega(s_p^1,s_p^2)=1$ for all $p\in P$ (cf.~ Section~\ref{sec:character_variety} in case $n=2$).
	
	\begin{rem} 
	    Note, that if $\omega(s_p^1,s_p^1)=0$, then the expression $\omega(s_p^1,s_p^2)$ is well-defined. Indeed, let $\tilde s_p^2$ and $(\tilde s^2_p)'$ be two lifts of $s_p^2$ to $A^2$. Then $(\tilde s^2_p)'=\tilde s^2_p+s^1_p a$ for some $a\in A$. Furthermore, 
	    $$\omega(s^1_p,(\tilde s^2_p)')=\omega(s^1_p,\tilde s^2_p+s^1_pa)=\omega(s^1_p,\tilde s^2_p)=:\omega(s^1_p,s^2_p).$$
	    For a normalized decoration, it is always enough to choose $s^1_p$ for every $p\in P$. Then $s^2_p$ becomes uniquely defined by the normalization condition $\omega(s_p^1,s_p^2)=1$.
	\end{rem}
	
	A \defin{framed twisted  $G$-local system} is a twisted  $G$-local system with an isotropic framing. A~\defin{decorated twisted  $G$-local system} is a twisted  $G$-local system with a normalized decoration.
	
	\begin{rem}
		Note, that since $\omega$ is a parallel form of even degree, the parallel transport of $\omega$ around the fiber of $T'S$ is trivial.
	\end{rem}

\subsection{Double bracket for symplectic and indefinite orthogonal local systems}

As we have seen in Section~\ref{sec:abelianization}, for a marked surface $(S,P)$ with a decoration $\mathcal D$ and with an ideal triangulation $\mathcal T$, the partial abelianization procedure produces a marked surface $(\Sigma, \hat P)$ with a decoration $\hat{\mathcal{D}}$ and with a hexagonal decomposition $\hat{\mathcal T}$ which is a ramified double covering $\Sigma\to S$. Such a covering always admits the covering involution $\theta\colon\Sigma\to\Sigma$, which preserves the set $\hat P$, the decoration $\hat{\mathcal D}$ and the hexagonal decomposition $\hat{\mathcal T}$.

This involution extends naturally in two ways to anti-involutions $\theta^{\pm}$ on $\mathcal A(\Sigma,\hat P)$ as follows: for any edge $\gamma$ of the hexagonal decomposition $\hat{\mathcal T}$, we define $\theta^{\pm}(\gamma):=\pm(\theta\circ\gamma)^{-1}$. Then $\theta^{\pm}$ extend $\Q$-linearly as anti-involutions to $\mathcal A(\Sigma,\hat P)$. Further, we extend $\theta^{\pm}$ to $\mathcal A(\Sigma,\hat P)\otimes \mathcal A(\Sigma,\hat P)$ by the rule $\theta^\pm(x\otimes y):=\theta^\pm(y)\otimes \theta^\pm(x)$. This definition implies that the double bracket $\dbr{\cdot,\cdot}$ on $\Sigma$ is equivariant under $\theta^\pm$.

Following the non-abelianization procedure, the tautological local system on $\Sigma$ gives rise to a framed twisted $\GL_2(\mathcal A(\Sigma,\hat P))$-local systems on $S$. By the characterization of decorated twisted symplectic local systems from~\cite[Section~5.3]{KR} and similar characterization of decorated twisted indefinite orthogonal local systems, it follows that the non-abelianization of tautological local system on $\Sigma$ is in fact symplectic with respect to the anti-involution $\theta^+$ and indefinite orthogonal with respect to the anti-involution $\theta^-$. 

We call an involutive algebra $(A,\sigma)$ \defin{big} if $A$ is big and $\sigma$ is not algebraic, i.e. there is no polynomial $P$ over $A$ such that $\sigma(a)=P(a)$ for all $a\in A$. Then, similarly to the case of framed twisted $\GL_2(A)$-local systems, a double quasi-Poisson bracket can by induced on the character variety $\Rep^{tw,d}((S,P),G)$ for every big involutive algebra $(A,\sigma)$ where $G$ is either $\Sp_2(A,\sigma)$ or $\OO_{(1,1)}(A,\sigma)$ as the push-forward bracket under the bijective algebra homomorphism~\eqref{eq:paths_to_functions}.

\begin{rem}
    The condition to be big for an involutive algebra $(A,\sigma)$ is crucial. Otherwise, the double bracket is not well-defined on $\Rep^{tw,d}((S,P),G)$. For instance, let $A$ be abelian and $\sigma=\Id$. Then the elements of $\mathcal F_A(\Sigma,\hat P)$ defined by an element $\gamma\in\pi_1^s(\Sigma,\hat P)$ and $\theta\circ\gamma^{-1}\in\pi_1^s(\Sigma,\hat P)$ coincide. However, the double bracket with these two elements are in general different: on~Figure~\ref{fig:covering} for $\gamma=a_{14}$, $\dbr{a_{14},a_{43}}=\frac{1}{2}a_{14}\otimes a_{43}$, but $\theta\circ\gamma^{-1}=a_{41}$ and $\dbr{a_{41},a_{43}}=0$.
\end{rem}

\bibliographystyle{plain}
\bibliography{bibl}

\begin{thebibliography}{10}

\bibitem{ABRRW}
Daniele Alessandrini, Arkady Berenstein, Vladimir Retakh, Eugen Rogozinnikov,
  and Anna Wienhard.
\newblock Symplectic groups over noncommutative algebras.
\newblock {\em Selecta Math. (N.S.)}, 28(4):Paper No. 82, 2022.

\bibitem{AOS24}
S.~Arthamonov, N.~Ovenhouse, and M.~Shapiro.
\newblock Noncommutative networks on a cylinder.
\newblock {\em Comm. Math. Phys.}, 405(5):Paper No. 129, 37, 2024.

\bibitem{Art18}
Semeon Arthamonov.
\newblock {\em Generalized quasi Poisson structures and noncommutative
  integrable systems}.
\newblock PhD thesis, Rutgers University, 2018.

\bibitem{BHR}
Arkady Berenstein, Min Huang, and Vladimir Retakh.
\newblock Noncommutative marked surfaces {I}{I}: tagged triangulations,
  clusters, quivers, and their symmetries.
\newblock in preparation.

\bibitem{BHRR}
Arkady Berenstein, Min Huang, Vladimir Retakh, and Eugen Rogozinnikov.
\newblock Polygon groups, ramified coverings, and relative mapping class
  groups.
\newblock in preparation.

\bibitem{BR}
Arkady Berenstein and Vladimir Retakh.
\newblock Noncommutative marked surfaces.
\newblock {\em Adv. Math.}, 328:1010--1087, 2018.

\bibitem{CEG07}
William Crawley-Boevey, Pavel Etingof, and Victor Ginzburg.
\newblock Noncommutative geometry and quiver algebras.
\newblock {\em Adv. Math.}, 209(1):274--336, 2007.

\bibitem{FG}
Vladimir Fock and Alexander Goncharov.
\newblock Moduli spaces of local systems and higher {T}eichm\"{u}ller theory.
\newblock {\em Publ. Math. Inst. Hautes \'{E}tudes Sci.}, (103):1--211, 2006.

\bibitem{gmn13}
Davide Gaiotto, Gregory~W. Moore, and Andrew Neitzke.
\newblock Spectral networks.
\newblock {\em Ann. Henri Poincar\'{e}}, 14(7):1643--1731, 2013.

\bibitem{GSV05}
Michael Gekhtman, Michael Shapiro, and Alek Vainshtein.
\newblock Cluster algebras and {W}eil-{P}etersson forms.
\newblock {\em Duke Math. J.}, 127(2):291--311, 2005.

\bibitem{G3}
William~M. Goldman.
\newblock The symplectic nature of fundamental groups of surfaces.
\newblock {\em Adv. in Math.}, 54(2):200--225, 1984.

\bibitem{GK}
Alexander Goncharov and Maxim Kontsevich.
\newblock Spectral description of non-commutative local systems on surfaces and
  non-commutative cluster varieties.
\newblock 2022.

\bibitem{GK13}
Alexander~B. Goncharov and Richard Kenyon.
\newblock Dimers and cluster integrable systems.
\newblock {\em Ann. Sci. \'Ec. Norm. Sup\'er. (4)}, 46(5):747--813, 2013.

\bibitem{hn16}
Lotte Hollands and Andrew Neitzke.
\newblock Spectral networks and {F}enchel-{N}ielsen coordinates.
\newblock {\em Lett. Math. Phys.}, 106(6):811--877, 2016.

\bibitem{K-Thesis}
Clarence Kineider.
\newblock {\em Partial abelianization of {$\mathrm{GL}_n$}-local systems and
  non-commutative {$\mathcal{A}$}-coordinates}.
\newblock PhD thesis, Université de Strasbourg, 2023.

\bibitem{KKRTT}
Clarence Kineider, Georgios Kydonakis, Eugen Rogozinnikov, Valdo Tatitscheff,
  and Alexander Thomas.
\newblock {\em Spectral Networks. Bridging higher rank Teichm\"uller theory and
  BPS states}.
\newblock in preparation.

\bibitem{KR}
Clarence Kineider and Eugen Rogozinnikov.
\newblock On partial abelianization of framed local systems.
\newblock {\em arXiv:2212.08393}, 2022.
\newblock to appear in Mathematische Annalen.

\bibitem{MT14}
Gw\'ena\"el Massuyeau and Vladimir Turaev.
\newblock Quasi-{P}oisson structures on representation spaces of surfaces.
\newblock {\em Int. Math. Res. Not. IMRN}, (1):1--64, 2014.

\bibitem{VdBergh04}
Michel Van~den Bergh.
\newblock Double {P}oisson algebras.
\newblock {\em Trans. Amer. Math. Soc.}, 360(11):5711--5769, 2008.

\end{thebibliography}
\end{document}